% !TEX TS-program = pdflatexmk
\documentclass[a4paper,11pt]{amsart}
\usepackage{hyperref,fancyhdr,mathrsfs,amsmath,amscd,amsthm,amsfonts,latexsym,amssymb,stmaryrd}
\usepackage[all]{xy}

\voffset 3mm
\topmargin 10mm
\evensidemargin  5mm
\oddsidemargin  5mm
\textwidth  145mm
\textheight 205mm
\headsep 5mm
\marginparsep 2mm
\marginparwidth 20mm
\footskip 0mm
\headheight 5mm

\linespread{1.1}

\pagestyle{fancy}
\lhead[{\footnotesize \thepage}]{\footnotesize EMBEDDINGS OF THE SPHERE WITH NONNEGATIVE NORMAL BUNDLES}
\chead[]{}
\rhead[\footnotesize Radu Pantilie]{\footnotesize \thepage}
\lfoot{}
\cfoot{}
\rfoot{}

\DeclareMathOperator{\rank}{rank}

\DeclareMathOperator{\dif}{d}

\newcommand{\ol}{\mathcal{O}}

\def \e{\eta}

\def \g{\gamma}

\def \G{\Gamma}

\def \phi{\varphi}
\def \Phi{\varPhi}
\def \p{\pi}
\def \r{\rho}

\def \C{\mathbb{C}\,}

\def\widecheckg{g^{\hspace*{-2.5pt}\vbox to 5pt{\hbox to
0pt{\LARGE$\check{}$}}}\hspace*{2pt}}

\def\widecheckl{\lambda^{\hspace*{-3.5pt}\vbox to 8pt{\hbox to
0pt{\LARGE$\check{}$}}}\hspace*{2pt}}

\hyphenation{pro-duct}

\begin{document}

\title{On the embeddings of the Riemann sphere\\ 
with nonnegative normal bundles} 
\author{Radu Pantilie} 
\thanks{This work is supported by a grant of the Ministery of Research and Innovation
CNCS-UEFISCDI, project no. PN-III-P4-ID-PCE-2016-0019, within PNCDI III} 
\email{\href{mailto:radu.pantilie@imar.ro}{radu.pantilie@imar.ro}} 
\address{R.~Pantilie, Institutul de Matematic\u a ``Simion~Stoilow'' al Academiei Rom\^ane,
C.P. 1-764, 014700, Bucure\c sti, Rom\^ania}
\subjclass[2010]{Primary 53C28, Secondary 53C26} 
\keywords{quaternionic geometry; twistor theory; embeddings of the Riemann sphere with nonnegative normal bundles}

\newtheorem{thm}{Theorem}[section]
\newtheorem{lem}[thm]{Lemma}
\newtheorem{cor}[thm]{Corollary}
\newtheorem{prop}[thm]{Proposition} 

\theoremstyle{definition}

\newtheorem{defn}[thm]{Definition}
\newtheorem{rem}[thm]{Remark}
\newtheorem{exm}[thm]{Example}

\numberwithin{equation}{section}
 
\begin{abstract} 
We describe the (complex) quaternionic geometry encoded by the embeddings of the Riemann sphere, 
with nonnegative normal bundles. 
\end{abstract} 

\maketitle 
\thispagestyle{empty}

\section*{Introduction} 

\indent 
There are problems that our generation can not hope to see them solved. Here are two such problems:\\ 
\indent 
\quad(1) classify the germs of the (known) geometric structures,\\ 
\indent 
\quad(2) classify the germs of the embeddings of compact complex manifolds.\\ 
\indent 
\emph{Twistor theory} provides a bridge between significant subclasses of the classes involved in (1) and (2)\,, above. 
From this perspective, in \emph{quaternionic geometry} the point is replaced by the Riemann sphere; that is, (up to a complexification) 
the objects are parameter spaces for locally complete families \cite{Kod} of embedded Riemann spheres. But, so far (see \cite{fq_2}\,, \cite{Pan-twistor_(co-)cr_q}\,, 
and the references therein), only for the families of Riemann spheres whose normal bundles split and are positive, there exists a fair understanding of the 
differential geometry of the corresponding parameter spaces (a positive vector bundle over the Riemann sphere is a complex analytic vector bundle 
whose Birkhoff--Grothendieck decomposition contains only terms of positive Chern numbers).\\ 
\indent 
In this note, we complete this picture by describing the geometric structures corresponding to the germs of embeddings of the Riemann sphere  
with nonnegative normal bundles. For this, we are led to consider \emph{principal $\r$-connections}, a straight generalization of the classical notion 
of principal connection, which appear (see Section \ref{section:gen_connection}, below) by shifting the emphasis from the tangent bundle to  
another vector bundle over the manifold (then $\r$ is a morphism from that bundle to the tangent bundle). 
The fact that the geometric structures may require another bundle, besides the tangent one, 
is not new (see \cite{Gua-AnnMath}\,). However, instead of a bracket, the quaternionic geometry requires a 
connection, to control the integrability \cite{Sal-dg_qm}\,. Our main result (Theorem \ref{thm:r-q}\,) shows that to integrate 
all the families of embedded Riemann spheres with nonnegative normal bundles we have to pass, from the classical connections, to the $\r$-connections, 
and from the (co-CR) quaternionic structures, to the \emph{$\r$-quaternionic structures} (see Section~\ref{section:r-q}\,).  
Furthermore, a bracket can be associated in this setting as well and, if $\r$ is surjective, the Jacobi identity is satisfied only if the corresponding 
$\r$-quaternionic manifold is obtained as a twistorial quotient of a (classical) quaternionic manifold. For example, the quaternionic geometry of the 
space of Veronese curves in the complex projective space is induced, through a twistorial submersion, from (the complexification of) a 
quaternionic manifold.\\ 
\indent 
As already mentioned, it was twistor theory who emphasized the richness of the differential geometry encoded by the embeddings of the Riemann sphere. 
Chronologically, there were the, by now, classical, anti-self-dual manifolds (see \cite{AtHiSi}\,), the three-dimensional Einstein--Weyl spaces 
\cite{Hit-complexmfds}\,, 
and the quaternionic manifolds \cite{Sal-dg_qm} (see, also, \cite{Bon}\,), containing the important pre-existent subclass of quaternionic--K\"ahler manifolds  
(see \cite{LeBSal-rigid_qK}\,). Furthermore, such spaces of `rational curves' (that is, nonconstant holomorphic maps from the sphere) are, also, involved 
in the classification of irreducible representations whose images can occur as holonomy groups of torsion free connections 
(\,\cite{Bry-holo_twist}\,, \cite{ChiSch-holo_twist}\,; see, also, \cite{MerSch}\,, and the references therein). Also, note that, 
the almost $\r$-quaternionic manifolds are related to the paraconformal manifolds of \cite{BaiEas-paraconf}\,.\\ 
\indent 
On the other hand, the algebraic geometers are interested in complex projective manifolds which admit an embedding of the Riemann sphere 
with positive normal bundle, as these are the `rationally connected manifolds' - see the paper \cite{Paltin-2005} of Paltin~Ionescu 
(and the references therein)  to whom I am grateful for kindly drawing my attention to this important fact.\\ 
\indent 
We hope that our approach will deepen the understanding of the interconnections of these two domains.

\section{Principal $\r$-connections} \label{section:gen_connection} 

\indent 
We work in the complex analytic category. By the tangent bundle we mean the holomorphic tangent bundle and if $E$ is a vector bundle 
then $\G(E)$ denotes the sheaf of its sections. All the facts that follow, in this section, can be straightforwardly extended to the smooth setting.\\ 
\indent 
Let $M$ be a manifold endowed with a pair $(E,\r)$\,, where $E$ is a vector bundle over $M$ and $\r:E\to TM$ is 
a morphism of vector bundles.\\ 
\indent 
Let $(P,M,G)$ be a principal bundle and let $\p:P\to M$ be the projection. A \emph{principal $\r$-connection} on $P$ 
is a $G$-equivariant vector bundle morphism $C:\p^*E\to TP$ which when composed to the morphism from $TP$ to $\p^*(TM)$\,, 
induced by $\dif\!\p$, gives $\p^*\r$.\\ 
\indent 
If $E=TM$ and $\r$ is the identity, we retrieve the classical notion of principal connection. Also, if $E$ is a distribution on $M$ 
and $\r$ is the inclusion morphism then a principal $\r$-connection is just a principal partial connection over $E$.\\ 
\indent 
Any principal $\r$-connection on $P$ corresponds to a morphism of vector bundles $c:E\to TP/G$ which when composed to $TP/G\to TM$ 
gives $\r$\,.\\ 
\indent 
Also, let $(a_{UV})_{U,V\in\mathcal{U}}$ be a cocycle defining $P$, where $\mathcal{U}$ is an open covering of $M$. 
Then any principal $\r$-connection on $P$ corresponds to a family $(A_U)_{U\in\mathcal{U}}$ of 
\emph{local connection forms}, where $A_U\in\G\bigl(\mathfrak{g}\otimes (E|_U)^*\bigr)$ and 
$A_V=\r^*\bigl(a_{UV}^*\theta\bigr)+{\rm Ad}\bigl(a_{UV}^{\,-1}\bigr)A_U$ 
on $U\cap V$\,, where $\mathfrak{g}$ is the Lie algebra of $G$, $\r^*:T^*M\to E^*$ is the transpose of $\r$\,, and $\theta$ is the canonical form on $G$. 
Over each local trivialisation $P|_U=U\times G$ we have $TP/G=(TU)\oplus\mathfrak{g}$ and $c=(\r\,,-A_U)$\,.\\    
\indent 
Obviously, any principal connection induces a principal $\r$-connection, but not all principal $\r$-connections are obtained this way. 
Also, the set of principal $\r$-connections is an affine space over the vector space of global sections of ${\rm Hom}(E,{\rm Ad}P)$\,. Consequently, 
similarly to the classical case \cite{At-57}\,, the obstruction to the existence of principal $\r$-connections is an element of $H^1\bigl(M,{\rm Hom}(E,{\rm Ad}P)\bigr)$  
(which, in the smooth case, is zero).\\ 
\indent 
Let $F$ be a bundle associated to $P$. Then any principal $\r$-connection on $P$ induces a morphism of vector bundles 
$c_F:\p^*E\to TF$ which we call a \emph{$\r$-connection} (here, $\p$ is the projection of $F$). 
If $F$ is a vector bundle then $c_F$ corresponds to a \emph{covariant derivation} $\nabla:\G(F)\to\G\bigl({\rm Hom}(E,F)\bigr)$ 
which is a linear map such that $\nabla(fs)=\r^*(\dif\!f)\otimes s+f\nabla s$\,, for any (local) function $f$ on $M$ and 
any (local) section $s\in\G(F)$. Note that, in \cite{Car-96} was considered a notion which is, essentially, the same with 
the covariant derivation of a $\r$-connection on a vector bundle.\\  
\indent 
The usual constructions of connections - such as, the pull-back and the tensor product - admit straight generalizations to $\r$-connections.\\ 
\indent 
Any $\r$-connection $\nabla$ on $E$ has a \emph{torsion} $T\in\G(TM\otimes\Lambda^2E^*)$ (compare \cite{Pan-integrab_co-cr_q}\,), characterised by 
$T(s_1,s_2)=\r\circ(\nabla_{s_1}s_2-\nabla_{s_2}s_1)-[\r\circ s_1,\r\circ s_2]$\,, for any $s_1,s_2\in\G(E)$\,. 
Note that, if $T=0$ then, on defining $[s_1,s_2]=\nabla_{s_1}s_2-\nabla_{s_2}s_1$\,, for any $s_1,s_2\in\G(E)$\,, 
we obtain that:\\ 
\indent 
\quad(i) $[\cdot,\cdot]$ is linear and skew-symmetric on $\G(E)$\,,\\ 
\indent 
\quad(ii) $\r$ intertwine $[\cdot,\cdot]$ and the usual bracket on vector fields on $M$\,,\\ 
\indent 
\quad(iii) $[s_1,fs_2]=(\r\circ s_1)(f)s_2+f[s_1,s_2]$\,, for any function $f$ on $M$ and $s_1,s_2\in\G(E)$\,.\\ 
We call $[\cdot,\cdot]$ the \emph{bracket associated to $\nabla$}.\\  
\indent 
For applications there is a characterisation of the torsion which in the classical case is a consequence of the Cartan's first structural equation. 
If $(P,M,G)$ is the frame bundle of $E$ then $\p^*E=P\times E_0$\,, where $E_0$ is the typical fibre of $E$.  
Then any principal $\r$-connection on $P$ is a $G$-invariant morphism of vector bundles $C:P\times E_0\to TP$. For any $\xi\in E_0$ let 
$B(\xi)$ be the vector field tangent to $P$ such that $B(\xi)_u=C(u,\xi)$\,, for any $u\in P$ (in the classical case, these become the well known 
`standard horizontal vector fields'; see, also, \cite{Pan-integrab_co-cr_q}\,). 

\begin{prop} \label{prop:first_Cartan} 
If $T$ is the torsion of $C$ then $T(u\xi,u\e)=-\dif\!\p\bigl([B(\xi),B(\e)]_u\bigr)$\,, for any $\xi,\e\in E_0$ and $u\in P$. 
\end{prop}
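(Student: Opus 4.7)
Plan for the proof.
Choose local sections $s_1,s_2\in\G(E)$ near $m_0=\p(u)$ with $s_i(m_0)=u\xi_i$ (writing $\xi_1=\xi$ and $\xi_2=\e$), and let $\tilde s_i:P\to E_0$ be their $G$-equivariant lifts. Set $\hat s_i=C\circ\tilde s_i$, a vector field on $P$. Then $\hat s_i(u)=B(\xi_i)_u$ by construction and, since $\dif\!\p\circ C=\p^*\r$, $\hat s_i$ is $\p$-related to $\r\circ s_i$, giving $\dif\!\p[\hat s_1,\hat s_2]_u=[\r s_1,\r s_2]_{m_0}$. A direct computation in a local trivialisation of $P$ (using $c=(\r,-A_U)$) shows that the covariant derivation is recovered from the standard horizontal vector fields via
\[u\cdot B(\zeta)_u(\tilde s)=(\nabla s)(u\zeta)\]
for any $\zeta\in E_0$ and $s\in\G(E)$, with $B(\zeta)_u$ acting as a tangent vector on the $E_0$-valued function $\tilde s$.

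Set $V_i:=B(\xi_i)-\hat s_i$; by linearity of $C$ in its $E_0$-argument, $V_i=C(\cdot,\xi_i-\tilde s_i(\cdot))$, so $V_i(u)=0$. Expanding,
\[[B(\xi),B(\e)]_u=[\hat s_1,\hat s_2]_u+[\hat s_1,V_2]_u+[V_1,\hat s_2]_u,\]
the $[V_1,V_2]_u$ term vanishing by the simultaneous vanishing of both factors at $u$. For $\dif\!\p[\hat s_1,V_2]_u$: since $V_2(u)=0$, the bracket equals the derivative at $u$ along $\hat s_1(u)$ of the $TM$-valued function $\dif\!\p\,V_2:u'\mapsto\r(u'\e-s_2(\p u'))$. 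Decomposing $u'\e=s_2(\p u')+u'(\e-\tilde s_2(u'))$, the $\r(s_2(\p u'))$ pieces cancel, leaving the derivative of $u'\mapsto\r\bigl(u'(\e-\tilde s_2(u'))\bigr)$ at $u$. Since $\e-\tilde s_2(u)=0$, this derivative equals $\r$ applied to $-u\cdot\dif\tilde s_2(\hat s_1(u))\in E_{m_0}$, which by the identity above is $-\r\bigl((\nabla s_2)(u\xi)\bigr)$. Symmetrically, $\dif\!\p[V_1,\hat s_2]_u=\r\bigl((\nabla s_1)(u\e)\bigr)$.

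Summing the three contributions,
\[\dif\!\p[B(\xi),B(\e)]_u=[\r s_1,\r s_2]_{m_0}-\r\bigl((\nabla s_2)(u\xi)-(\nabla s_1)(u\e)\bigr)=-T(u\xi,u\e),\]
as required. The main subtlety is the mixed-bracket step: $V_i(u)=0$ reduces the Lie bracket $[\hat s_i,V_j]_u$ to a directional derivative of the $TM$-valued function $\dif\!\p\,V_j$, and the splitting $u'\e=s_2(\p u')+u'(\e-\tilde s_2(u'))$ engineers the cancellation that isolates the covariant-derivative term through the identity $u\cdot B(\zeta)_u(\tilde s)=(\nabla s)(u\zeta)$.
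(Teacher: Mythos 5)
Your argument is correct, and it reaches the identity by a genuinely different route from the paper. The paper trivialises $P=M\times G$, writes $B(\xi)_{(x,a)}=\bigl(\r(x,a\xi),-\G(x,a\xi)a\bigr)$ explicitly, and computes the bracket at $(x,e)$ using the observation that $\dif\!\p\bigl([X,A]\bigr)$ depends only on $X$ and the pointwise value of the $G$-tangent field $A$; the three terms of the torsion then appear directly from that chart computation. You instead argue invariantly: extending $u\xi,u\e$ to local sections $s_1,s_2$ (legitimate, since $T$ is tensorial), splitting $B(\xi_i)=\hat s_i+V_i$ with $\hat s_i$ $\p$-related to $\r\circ s_i$ and $V_i$ vanishing at $u$, and then converting the mixed brackets into covariant derivatives via the horizontal-lift identity $u\cdot B(\zeta)_u(\tilde s)=(\nabla s)(u\zeta)$ --- the standard Kobayashi--Nomizu-style proof of the first structure equation, transplanted to $\r$-connections. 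Your version buys coordinate independence and makes visible which term of $T$ each bracket contributes, at the cost of having to justify two auxiliary facts: that $[X,Y]_u$ reduces to a directional derivative at a zero of $Y$ (and that this commutes with pushing forward by $\dif\!\p$ and with applying $\r$, both of which hold precisely because the relevant sections vanish at $u$), and the identity $u\cdot B(\zeta)_u(\tilde s)=(\nabla s)(u\zeta)$ itself, which you assert but do not verify; with the paper's convention $c=(\r,-A_U)$ it does come out to $\nabla_\zeta s=(\r\zeta)(s)+A_U(\zeta)s$, so the signs close up and your final sum agrees with $-T(u\xi,u\e)$. The paper's computation is shorter and self-contained; yours generalises more readily (e.g.\ to associated bundles other than $E$) and would be worth writing out with the one-line trivialisation check of the horizontal-lift identity included.
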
 
\begin{proof} 
We may assume $P=M\times G$ and verify the relation at $u=(x,e)$\,, where $e$ is the identity of $G$. As $C$ is $G$-invariant, we have 
$B(\xi)_{(x,a)}=\bigl(\r(x,a\xi),-\G(x,a\xi)a\bigr)$\,, for any $x\in M$ and $a\in G$, where $\G$ is the corresponding local connection form.\\ 
\indent 
If $X$ and $A$ are vector fields on $P$ such that, at each point, $X$ is tangent to $M$ and $A$ is tangent to $G$ 
then, for any $x\in M$ and $a\in G$, we have that $\dif\!\p\bigl([X,A]_{(x,a)}\bigr)$ depends only of $X$ and $A_a$\,.\\ 
\indent  
Consequently, the bracket of $B(\xi)$ and the component of $B(\e)$ tangent to $G$ is mapped by $\dif\!\p_{(x,e)}$ to $\r(x,\G(x,\e)\xi)$\,. 
Hence, $$\dif\!\p\bigl([B(\xi),B(\e)]_{(x,e)}\bigr)=\r(x,\G(x,\e)\xi)-\r(x,\G(x,\xi)\e)+[\r(\cdot,\xi),\r(\cdot,\e)]_x\;,$$ 
which is easily seen to be equal to $-T\bigl((x,\xi),(x,\e)\bigr)$\,. 
\end{proof}

\indent 
Returning, now, to a principal $\r$-connection $c:E\to TP/G$ on $P$, suppose that $E$ is endowed with a bracket $[\cdot,\cdot]$\,, 
satisfying (i)\,, (ii)\,, (iii)\,, above. Then we can define the \emph{curvature form} 
$R\in\G\bigl({\rm Ad}P\otimes\Lambda^2E^*\bigr)$ of $c$\,, which, for any $s_1,s_2\in\G(E)$\,, 
is given by $R(s_1,s_2)=c\circ[s_1,s_2]-[c\circ s_1,c\circ s_2]$\,, where the second bracket is induced by the usual bracket 
on vector fields on $P$, and we have identified ${\rm Ad}P$ with the kernel of the morphism $TP/G\to TM$.\\ 
\indent  
All the classical formulae involving $R$ admit straight generalizations to this setting, up to 
the Bianchi identities for which one needs that the bracket on $E$ satisfies the Jacobi identity  
(see \cite{KMa-BLMS95} for the corresponding theory of such brackets). Then, by using \cite[Theorem 2.2]{KMa-BLMS95}\,, 
if $\r$ is surjective, at least locally, we have $E=TQ/H$ and the following commutative diagram, 
where $(Q,M,H)$ is a principal bundle: 
\begin{displaymath}
\xymatrix{
         &                       &                       &   0  \ar[d]               &  \\
         &                       &                       & {\rm Ad}Q \ar[d]                         &  \\
         &                       &                       & TQ/H  \ar[d]^{\r} \ar[dl]_{c} &  \\
0 \ar[r] & {\rm Ad}P \ar[r]^{\iota}      &   TP/G \ar[r]    &TM \ar[r] \ar[d] & 0 \\
         &                       &                               &   0                       &   }
\end{displaymath}
\indent 
There are two important particular cases, the first of which showing that the Cartan connections are just a special kind of principal $\r$-connections.  

\begin{exm} \label{exm:princ_Cartan} 
Let $H\subseteq G$ be a closed subgroup, and $Q$ a restriction of $P$ to $H$.\\ 
\indent  
Then the simplest example of a principal $\r$-connection is the canonical inclusion map 
$c_0:TQ/H\to TP/G$ (this is, obviously, flat; that is, its curvature form is zero).\\ 
\indent 
Any other principal $\r$-connection $c$ on $P$ differ from $c_0$ by a vector bundles morphism $\g:TQ/H\to{\rm Ad}P$\,; that is, 
$c=c_0-\iota\circ\g$\,.\\  
\indent 
Note that, in this setting, the Cartan connections appear as principal $\r$-connections $c$ on $P$ satisfying the following:\\  
\indent 
\quad(1) $c$ is induced by a (classical) principal connection on $P$ (that is, $c$ factors into $\r$ followed by a section of $TP/G\to TM$),\\ 
\indent 
\quad(2) $c$ restricted to ${\rm Ad}Q$ takes values in ${\rm Ad}P$ and is given by the inclusion $\mathfrak{h}\to\mathfrak{g}$\,,\\ 
\indent 
\quad(3) the corresponding $\g:TQ/H\to{\rm Ad}P$ is an isomorphism.\\ 
\indent  
It is fairly well known that for flat Cartan connections we, locally, have $Q=G$, $M=G/H$ 
and $\g$ is the canonical isomorphism of vector bundles from $TG/H$ onto $(G/H)\times\mathfrak{g}$\,. 
Then Proposition \ref{prop:first_Cartan} gives that the torsion is given 
by $T_0\in(\mathfrak{g}/\mathfrak{h})\otimes\Lambda^2\mathfrak{g}^*$ defined by $T_0(A,B)$ is the projection 
of $-[A,B]$ onto $\mathfrak{g}/\mathfrak{h}$\,. 
\end{exm} 

\indent 
The second class of examples of principal $\r$-connections is, in a certain sense, dual to Example \ref{exm:princ_Cartan}\,. 

\begin{exm} \label{exm:for_Veronese} 
Let $G\subseteq H$ be a closed subgroup, $P$ a restriction of $Q$ to $G$, and $c$ a retraction of the inclusion $TP/G\to TQ/H$. 
Then $c$ is determined by its kernel which is a vector subbundle of ${\rm Ad}Q$\,.\\  
\indent 
We shall need the case $P=K$, where $K$ is a Lie group, with $G\subseteq H\subseteq K$ closed subgroups. Then $M=K/G$ 
and we may identify $Q=(K\times H)/G$, where $G$ is embedded diagonally as a closed Lie subgroup of $K\times H$. 
Accordingly, the action of $H$ on $Q$ is obtained from the action to the left of $H$, seen as a normal subgroup of $K\times H$.\\ 
\indent 
Then $TQ/H$ is the bundle associated to $(K,K/G,G)$ through the representation 
of $G$ on $(\mathfrak{k}\times\mathfrak{h})/\mathfrak{g}$ induced by the adjoint representations of $K$ and $H$, 
where $\mathfrak{g}$\,, $\mathfrak{h}$ and $\mathfrak{k}$ are the Lie algebras of $G$, $H$ and $K$, respectively.  
Hence, $\r$ is given by the projection from $(\mathfrak{k}\times\mathfrak{h})/\mathfrak{g}$ 
onto $\mathfrak{k}/\mathfrak{g}$\,; in particular, ${\rm ker}\r$ is the subbundle corresponding 
to $(\mathfrak{g}\times\mathfrak{h})/\mathfrak{g}=\mathfrak{h}$\,.\\ 
\indent 
Thus, any $K$-invariant principal $\r$-connection $c:TQ/H\to TP/G$ which is the identity on $TP/G$ 
corresponds to a $G$\,-invariant projection $\g:(\mathfrak{k}\times\mathfrak{h})/\mathfrak{g}\to\mathfrak{k}$ 
which is the identity when restricted to $\mathfrak{k}$ 
and when composed with the projection $\mathfrak{k}\to\mathfrak{k}/\mathfrak{g}$ gives the projection   
from $(\mathfrak{k}\times\mathfrak{h})/\mathfrak{g}$ onto $\mathfrak{k}/\mathfrak{g}$\,. Therefore any such principal $\r$-connection 
corresponds to a $G$\,-invariant decomposition $\mathfrak{h}=\mathfrak{g}\oplus\mathfrak{m}$\,, for some vector subspace 
$\mathfrak{m}\subseteq\mathfrak{h}$\,, so that $c$ is induced by the projection $\g$ from 
$\mathfrak{k}\times\mathfrak{m}\bigl(=(\mathfrak{k}\times\mathfrak{h})/\mathfrak{g}\bigr)$ onto $\mathfrak{k}$ with kernel $\mathfrak{m}$\,.\\ 
\indent 
The curvature form of the principal $\r$-connection corresponding to $\mathfrak{m}$ is given by the $\mathfrak{g}$-valued 
two-form on $\mathfrak{k}\times\mathfrak{m}$ induced by the $\mathfrak{g}$-valued two-form on $\mathfrak{m}$ which is the composition 
of the restriction of the bracket on $\mathfrak{h}$ followed by the opposite of the projection on $\mathfrak{g}$\,. Furthermore, 
by using Proposition \ref{prop:first_Cartan}\,, we obtain that the torsion is given by the composition of the bracket on $\mathfrak{k}$ 
followed by the opposite of the projection on $\mathfrak{k}/\mathfrak{g}$\,.\\ 
\indent 
On comparing with Example \ref{exm:princ_Cartan}\,, we see that these $K$-invariant principal $\r$-con\-nec\-tions on $(K,K/G,G)$ 
are `extentions' of $H$-invariant (classical) principal connections  
on $(H,H/G,G)$ by the flat Cartan connection on $(K,K/G,G)$. 
\end{exm}

\section{The quaternionic geometry of the embeddings of the Riemann sphere} \label{section:r-q} 

\indent 
A \emph{(complex) quaternionic vector bundle} is a (complex analytic) vector bundle $E$ with typical fibre $\C^{\!2}\otimes \C^{\!k}$ 
and structural group ${\rm SL}(2,\C)\cdot{\rm GL}(k,\C)$ acting on the typical fibre through the tensor product of the canonical representations 
of the factors. Thus, at least locally, any quaternionic vector bundle is of the form $S\otimes F$, where $S$ and $F$ are vector bundles 
and the structural group of $S$ is ${\rm SL}(2,\C)$\,; obviously, $Z=PS$ is always globally defined.\\ 
\indent 
An \emph{almost $\r$-quaternionic structure} on a manifold $M$ is a pair $(E,\r)$\,, where $E$ is a quaternionic vector bundle over $M$,  
and $\r:E\to TM$ is a morphism of vector bundles whose kernel, at each point, contains no (nonzero) decomposable elements. 
Obviously, if $\r$ is an isomorphism then we obtain the (complexified version of the) notion of almost quaternionic structure (see \cite{Sal-dg_qm}\,), 
whilst if $\r$ is surjective then we obtain the notion of almost co-CR quaternionic structure \cite{fq_2}\,.\\ 
\indent 
Suppose that $E\,(=S\otimes F)$ is endowed with a $\r$-connection $\nabla$ (compatible with its structural group) 
and let $c_Z:\p^*E\to TZ$ be the induced $\r$-connection on $Z$, where $\p:Z\to M$ is 
the projection. Let $\mathcal{B}$ be the distribution on $Z\,(=PS)$ such that $\mathcal{B}_{[e]}=c_Z\bigl(\{e\otimes f\,|\,f\in E_{\p(e)}\}\bigr)$\,, 
for any $[e]\in Z$\,. As $c_Z$ is a $\r$-connection and the kernel of $\r$ contains no decomposable elements, $\mathcal{B}$ is well-defined; 
furthermore, $\mathcal{B}\cap({\rm ker}\dif\!\p)=0$\,.\\ 
\indent 
We say that $(M,E,\r)$ is a \emph{$\r$-quaternionic manifold} if the $\r$-connection $\nabla$ can be chosen such that $\mathcal{B}$ is integrable; 
then $\nabla$ is a \emph{quaternionic $\r$-connection}. If, further, there exists a surjective submersion 
$\p_Y:Z\to Y$ such that ${\rm ker}\dif\!\p_Y=\mathcal{B}$ and each fibre of $\p_Y$ intersects at most once each fibre of $\p$, then $Y$ endowed 
with $\bigl\{\p_Y\bigl(\p^{-1}(x)\bigr)\bigr\}_{x\in M}$\,, is the \emph{twistor space} of  $(M,E,\r,\nabla)$\,.
Obviously, if $\r$ is surjective and $\nabla$ is a (classical) connection then we retrieve the co-CR quaternionic manifolds \cite{fq_2}\,. 

\begin{thm} \label{thm:r-q} 
There exists a natural bijective correspondence between the following, where $Y$ is a complex manifold:\\ 
\indent 
{\rm (i)} germs of (complex analytic) embeddings of the Riemann sphere into $Y$ with nonnegative normal bundles;\\ 
\indent 
{\rm (ii)} germs of $\r$-quaternionic manifolds whose twistor spaces are open subsets of $Y$. 
\end{thm}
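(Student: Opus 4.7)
The plan is to construct the correspondence in both directions and then verify that the two constructions are mutually inverse.

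\emph{Direction} (ii) $\Rightarrow$ (i): given a germ of a $\r$-quaternionic manifold $(M,E,\r,\nabla)$ with twistor space (an open subset of) $Y$, for each $x\in M$ the restriction $\p_Y|_{\p^{-1}(x)}$ is an embedding of $PS_x\cong\mathbb{C}P^1$ into $Y$ by the one-intersection hypothesis. To compute its normal bundle $N_x$ I would combine the exact sequences
\[ 0 \to \ker\dif\!\p|_{\p^{-1}(x)} \to TZ|_{\p^{-1}(x)} \to \mathcal{O}\otimes T_xM \to 0 \]
and
\[ 0 \to \mathcal{B}|_{\p^{-1}(x)} \to TZ|_{\p^{-1}(x)} \to \p_Y^*TY|_{\p^{-1}(x)} \to 0, \]
with $\mathcal{B}\cap\ker\dif\!\p=0$, to obtain $N_x = (\mathcal{O}\otimes T_xM)/\r(\mathcal{O}(-1)\otimes F_x)$, where $\mathcal{O}(-1)\subset\p^*S$ is tautological and $F_x$ is the complementary factor in a local decomposition $E_x=S_x\otimes F_x$. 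The condition that $\ker\r$ contains no nonzero decomposable element makes $\r(\mathcal{O}(-1)\otimes F_x)$ a subbundle of the trivial bundle $\mathcal{O}\otimes T_xM$; dualizing the defining sequence puts $N_x^*$ as a subbundle of a trivial bundle, and since any subbundle of a trivial bundle on $\mathbb{C}P^1$ has only nonpositive Birkhoff--Grothendieck summands, $N_x$ has only nonnegative summands.

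\emph{Direction} (i) $\Rightarrow$ (ii): given an embedding $\iota\colon\mathbb{C}P^1\hookrightarrow Y$ with $N\geq 0$, I would invoke Kodaira's theorem (applicable since $H^1(N)=0$) to produce a germ of a complex manifold $M$ parametrizing the maximal locally complete family, with $T_xM=H^0(\mathbb{C}P^1_x,N_x)$; then form the incidence space $Z\subset M\times Y$ with projections $\p\colon Z\to M$ and $\p_Y\colon Z\to Y$, and set $\mathcal{B}:=\ker\dif\!\p_Y$, integrable and transverse to the fibres of $\p$. The key identification is that $\mathcal{B}|_{\p^{-1}(x)}$, viewed inside $\mathcal{O}\otimes T_xM$ via $\dif\!\p$, is the kernel of the evaluation $\mathcal{O}\otimes H^0(\mathbb{C}P^1_x,N_x)\to N_x$; decomposing $N_x=\bigoplus\mathcal{O}(a_i)$ with each $a_i\geq 0$ and using that the kernel of $\mathrm{Sym}^a S^*\otimes\mathcal{O}\to\mathcal{O}(a)$ is $\mathrm{Sym}^{a-1}S^*\otimes\mathcal{O}(-1)$ summand by summand, this yields $\mathcal{B}|_{\p^{-1}(x)}\cong\mathcal{O}(-1)^k$ with $k=\deg N_x$. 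I build the quaternionic structure by taking $S$ to be the locally defined rank-$2$ bundle with $Z=PS$, setting $F:=\p_*(\mathcal{B}\otimes\mathcal{O}_\p(1))$ (a vector bundle of rank $k$ since $H^1$ vanishes fibrewise), and $E:=S\otimes F$; the ambiguity $(S,F)\leadsto(S\otimes L,F\otimes L^{-1})$ cancels so that $E$ is a well-defined quaternionic bundle with structural group $\mathrm{SL}(2,\C)\cdot\mathrm{GL}(k,\C)$. The morphism $\r\colon E\to TM$ is induced by composing the isomorphism $\mathcal{O}_\p(-1)\otimes\p^*F\cong\mathcal{B}$ with $\mathcal{B}\hookrightarrow TZ\to\p^*TM$ and pushing down; the no-decomposable-elements condition for $\ker\r$ follows from the fibrewise injectivity of $c_Z$ on $\mathcal{O}_\p(-1)\otimes\p^*F$ together with $\mathcal{B}\cap\ker\dif\!\p=0$. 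The $\r$-connection $\nabla$ is then produced so that the lift $c_Z\colon\p^*E\to TZ$ restricts on $\mathcal{O}_\p(-1)\otimes\p^*F$ to the isomorphism with $\mathcal{B}$; the resulting distribution is $\mathcal{B}$, hence integrable.

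The main obstacle I foresee is the identification $\mathcal{B}|_{\p^{-1}(x)}\cong\mathcal{O}(-1)^k$: a subbundle of a trivial bundle on $\mathbb{C}P^1$ need not split as copies of $\mathcal{O}(-1)$ in general, but the specific subbundle arising as the kernel of the evaluation of a nonnegative bundle does, thanks to the Birkhoff--Grothendieck splitting of $N_x$ and the summand-by-summand form of the evaluation kernel. A secondary delicate point is extending the local data $(S,F)$ globally so that $E=S\otimes F$ is intrinsically a quaternionic bundle on $M$; the structural group $\mathrm{SL}(2,\C)\cdot\mathrm{GL}(k,\C)$ is precisely what absorbs the twist ambiguity in the factor decomposition. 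The proof would conclude by checking that the two constructions are mutually inverse up to the natural equivalence of germs.
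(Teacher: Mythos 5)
Your two constructions, and the identifications underlying them --- the incidence space, the exact sequence $0\to\mathcal{B}_x\to\mathcal{O}\otimes T_xM\to Nt_x\to 0$, the computation $\mathcal{B}_x\cong k\mathcal{O}(-1)$ with $k=\deg Nt_x$, the bundle $E$ (your $F=\pi_*\bigl(\mathcal{B}\otimes\mathcal{O}_\pi(1)\bigr)$ agrees with the paper's $E^*=\pi_*(\mathcal{B}^*)$ via the projection formula), and the definition of $\rho$ --- coincide with the paper's, and your converse direction is sound (the paper gets nonnegativity from $Nt_x$ being a quotient of a trivial bundle; you argue the dual statement). The genuine gap is the single step you dispose of in one clause: ``the $\rho$-connection $\nabla$ is then produced so that $c_Z$ restricts on $\mathcal{O}_\pi(-1)\otimes\pi^*F$ to the isomorphism with $\mathcal{B}$.'' This is the heart of the theorem and it is not automatic. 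You must extend the tautologically given map on the subbundle $\mathcal{B}\subseteq\pi^*E$ to a morphism $\pi^*E\to TZ$ lifting $\pi^*\rho$; equivalently, lift $R:\mathcal{E}\to\pi^*(TM)/\mathcal{B}$ through the quotient $TZ/\mathcal{B}\to\pi^*(TM)/\mathcal{B}$, whose kernel is $\ker\dif\!\pi$. The obstruction is a class in $H^1(Z,\mathcal{E}^*\otimes\ker\dif\!\pi)$, and the paper kills it locally by noting that this sheaf restricts to each fibre of $\pi$ as $k\mathcal{O}(1)$ (so the fibrewise $H^1$ vanishes) and applying the K\"unneth formula over a small $U\subseteq M$. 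Without some such argument you have not produced $\nabla$; you have only described what it must do on $\mathcal{B}$.

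A second, smaller omission: a bundle morphism $\pi^*E\to TZ$ is not a priori induced by a $\rho$-connection on $E$ compatible with the structural group ${\rm SL}(2,\C)\cdot{\rm GL}(k,\C)$. The paper handles this by observing that $H^0\bigl(t_x,(TZ)|_{t_x}\bigr)$ is precisely the fibre over $x$ of the quotient of the tangent bundle of ${\rm SL}(S)$ by ${\rm SL}(2,\C)$, so a morphism $\pi^*E\to TZ$ of the required type automatically descends to a principal $\rho$-connection datum $E\to T({\rm SL}(S))/{\rm SL}(2,\C)$. These two points together constitute the actual content of the existence half of the proof; the remainder is Kodaira--Spencer bookkeeping that you have reproduced correctly.
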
  
\begin{proof} 
Let $Y$ be a manifold endowed with an embedded Riemann sphere $t_0\subseteq Y$, with nonnegative normal bundle. 
Then, by \cite{Kod}\,, there exist a map $\p_Y:Z\to Y$ and a surjective proper submersion $\p:Z\to M$ such that 
$\p_Y$ restricted to each fibre of $\p$ is an embedding, and 
$t_0=\p_Y\bigl(\p^{-1}(x_0)\bigr)$\,, for some $x_0\in M$; moreover,  
$\p_Y$ and $\p$ induce linear isomorphisms $T_xM=H^0(t_x,Nt_x)$\,, where $t_x=\p_Y\bigl(\p^{-1}(x)\bigr)$\,,   
and $Nt_x$ is its normal bundle, $(x\in M)$\,. Furthermore, by the rigidity of the Riemann sphere (see \cite{Nar-deformations}\,), 
$\p$ is locally trivial. \\ 
\indent 
As $Nt_0$ is nonnegative, it is generated by its sections. Consequently, $\dif\!\p_Y$ restricted to $\p^{-1}(x_0)$ is submersive. 
Thus, by passing, if necessary, to open neighbourhoods of $x_0$ and $t_0$ we may assume $\p_Y$ a surjective submersion. 
Therefore, for any $x\in M$, we have an exact sequence 
\begin{equation} \label{e:exact_sequence} 
0\longrightarrow\mathcal{B}_x\longrightarrow t_x\times T_xM\longrightarrow Nt_x\longrightarrow0\;, 
\end{equation}  
where we have identified $t_x=\p^{-1}(x)$\,, we have denoted $\mathcal{B}={\rm ker}\dif\!\p_Y$, $\mathcal{B}_x=\mathcal{B}|_{t_x}$\,, 
and the inclusion morphism $\mathcal{B}_x\to T_xM$ is induced by $\dif\!\p$\,. Together with the fact that the induced linear map $T_xM\to H^0(t_x,Nt_x)$ 
is bijective, this shows that $Nt_x$ is nonnegative, for any $x\in M$. Furthermore, from the exact sequence of cohomology groups, 
induced by \eqref{e:exact_sequence}\,, we deduce that both $H^0(t_x,\mathcal{B}_x)$ and $H^1(t_x,\mathcal{B}_x)$ are zero. 
Therefore the Birkhoff--Grothendieck decomposition of $\mathcal{B}_x$ contains only terms of Chern number~$-1$\,.\\ 
\indent 
Consequently, by \cite[Theorems 7.4]{KodSpe-I_II} and \cite[Theorem 9]{KodSpe-III} (see, also, \cite{GraRem-cas}\,,\,\cite{Nar-deformations}\,), 
the dual of the direct image, through $\p$, of $\mathcal{B}^*$ is a vector bundle over $M$ which, by also using \cite[\S3]{fq}\,, it is 
a quaternionic vector bundle. Denote this quaternionic vector bundle by $E$ and, note that, $\p^*E$ contains (the annihilator of) $\mathcal{B}$ 
so that we have an exact sequence $0\longrightarrow\mathcal{B}\longrightarrow\p^*(E)\longrightarrow\mathcal{E}\longrightarrow0$\,, 
for some vector bundle $\mathcal{E}$ over $Z$.\\ 
\indent 
The exact sequence of cohomology groups of the dual of \eqref{e:exact_sequence} induces a linear map $\r^*_x:T_x^*M\to H^0(t_x,\mathcal{B}_x^*)=E_x^*$\,, 
for any $x\in M$. By applying, for example, \cite[\S3]{fq}\,, we obtain that $(E,\r)$ is an almost $\r$-quaternionic structure on $M$. 
Note that, $\r$ is induced by a morphism of vector bundles $R:\mathcal{E}\to\p^*(TM)/\mathcal{B}$.\\  
\indent 
At least locally, we may suppose $E=S\otimes F$ with $S$ of rank $2$ and whose structural group is ${\rm SL}(2,\C)$\,. 
Also, $Z=PS$ and the fibre, over $x$\,, of the quotient of the tangent bundle of ${\rm SL}(S)$\,, through ${\rm SL}(2,\C)$\,,  
is $H^0\bigl(t_x,(TZ)|_{t_x}\bigr)$\,. Therefore, to prove the existence of a quaternionic $\r$-connection on $E$ which determines $Y$, 
it is sufficient to find a morphism $\p^*E\to TZ$ which when restricted to $\mathcal{B}$ is the identity, and when composed 
with the morphism $TZ\to\p^*(TM)$\,, induced by $\dif\!\p$, it is equal to $\p^*\r$\,; equivalently, we have to find a morphism 
$\mathcal{E}\to TZ/\mathcal{B}$ which when composed to $TZ/\mathcal{B}\to\p^*(TM)/\mathcal{B}$ 
is equal to $R$\,.\\ 
\indent 
Now, the obstruction to build such a morphism is an element of $H^1(Z,\mathcal{E}^*\otimes{\rm ker}\dif\!\p)$\,. As $Z$ is locally trivial, 
and the restriction of $\mathcal{E}^*\otimes{\rm ker}\dif\!\p$ to each fibre of $\p$ is isomorphic to $k\ol(1)$\,, where $\rank\mathcal{E}=k$\,, 
an application of the K\"unneth formula shows that any point of $M$ has an open neighbourhood $U$ such that the restriction of this 
obstruction to $Z|_U$ is zero.\\ 
\indent 
Conversely, suppose that an open subset of $Y$ is the twistor space of a $\r$-quaternionic manifold $M$. Then, for all $x\in M$, we obtain exact sequences 
as \eqref{e:exact_sequence}\,, where now we know that the Birkhoff--Grothendieck decomposition 
of $\mathcal{B}_x$ contains only terms of Chern number $-1$\,; hence, $T_xM=H^0(t_x,Nt_x)$ and, consequently, $Nt_x$ is nonnegative. 
\end{proof} 

\indent 
Note that, Theorem \ref{thm:r-q} gives, in fact, an isomorphism of categories, as the families of embedded Riemann spheres involved are locally complete. 
In particular, any $\r$-quaternionic vector space (that is, a $\r$-quaternionic vector bundle over a point) corresponds to a nonnegative vector bundle 
over the sphere (cf.\ \cite{Qui-QJM98}\,; see, also, \cite{Pan-vq}\,).\\ 
\indent 
If we apply Theorem \ref{thm:r-q} to the Veronese curve in the complex projective space, of dimension at least two, then we obtain a $\r$-quaternionic manifold 
$(M,E,\r)$ endowed with a quaternionic $\r$-connection which is not a (classical) connection. Indeed, if that would have been the case then the normal bundle sequence 
of the Veronese curve would have split, which would contradict \cite{MorrRos-MathAnn78}\,. In fact, our approach gives, in particular, a new proof for that 
consequence of \cite{MorrRos-MathAnn78}\,, as we shall now explain. 

\begin{exm} \label{exm:qgfs-Veronese} 
Let $U_k=\odot^kU_1$\,, where $\dim U_1=2$\,, and $\odot$ is the symmetric product, $(k\in\mathbb{N})$\,. Then $U_k$ 
is the irreducible representation of dimension $k+1$ of ${\rm SL}(U_1)$\,.\\ 
\indent  
For any $k\geq1$\,, we have an embedding ${\rm PGL}(U_1)\subseteq{\rm PGL}(U_k)$ and the quotient ${\rm PGL}(U_k)/{\rm PGL}(U_1)$\,, 
obviously, parametrizes the space of Veronese curves on $PU_k$\,.\\ 
\indent 
We are in the setting of Example \ref{exm:for_Veronese} with $G={\rm PGL}(U_1)$\,, $H={\rm PGL}(U_{k-1})$\,, and $K={\rm PGL}(U_k)$\,, 
where $k\geq2$ (the case $k=1$ is trivial). To describe the quaternionic $\r$-connection, note that, the representation of $G$ on the Lie algebra $\mathfrak{k}$ 
of $K$,  induced by the adjoint representation of $K$, is $U_2\oplus\cdots\oplus U_{2k}$\,; indeed, as $\mathfrak{k}\oplus U_0=U_k\otimes U_k$\,, 
we may apply \cite[p.\ 151]{FulHar}\,. Accordingly, for the Lie algebra $\mathfrak{h}$ of $H$ we have 
$\mathfrak{h}=U_2\oplus\cdots\oplus U_{2k-2}$\,, and the quaternionic $\r$-connection is given by $\mathfrak{m}=U_4\oplus\cdots\oplus U_{2k-2}$\,. 
(Note that, if $k=2$ then we, also, are in the setting of Example \ref{exm:princ_Cartan} with $G={\rm PGL}(U_2)$\,, $H={\rm PGL}(U_1)$ 
and the quaternionic $\r$-connection given by the flat Cartan connection on $(G,G/H,H)$\,.)\\ 
\indent 
From the proof of Theorem \ref{thm:r-q} it follows that the invariant principal $\r$-connection on $(K,K/G,G)$ is unique. 
As this is not a classical principal connection, the normal bundle sequence of a Veronese curve does not split. 
\end{exm} 

\indent 
Finally, let $Y$ be the twistor space of a $\r$-quaternionic manifold $(M,E,\r)$\,, with $\r$ surjective, $\rank E>4$\,. Suppose that 
$E$ is endowed with a bracket which satisfies the Jacobi identity. By, also, applying 
\cite[Theorem 2.2]{KMa-BLMS95}\,, we obtain that, at least locally, there exists a quaternionic manifold $Q$ and a 
twistorial submersion $\phi:Q\to M$ (that is, $\phi$ corresponds to a submersion from the twistor space of $Q$ onto $Y$).  
For example, if $M$ is the space of Veronese curves of degree $k$ then $Q=\bigl({\rm PGL}(U_k)\times{\rm PGL}(U_{k-1})\bigr)/{\rm PGL}(U_1)$\,. 
By, also, taking into consideration the conjugations we obtain  real quaternionic structures on ${\rm SU}(3)$ (see \cite{Joyce-quatern_consrt}\,), 
and $\bigl({\rm SU}(k+1)\times{\rm SU}(k)\bigr)/{\rm SU}(2)$\,.


\begin{thebibliography}{10} 

\bibitem{At-57} 
M.~F.~Atiyah, Complex analytic connections in fibre bundles, 
\textit{Trans. Amer. Math. Soc.}, {\bf 85} (1957) 181--207. 

\bibitem{AtHiSi}
M.~F.~Atiyah, N.~J.~Hitchin, I.~M.~Singer, Self-duality in four-dimensional
Riemannian geometry, \textit{Proc. Roy. Soc. London Ser. A}, {\bf 362} (1978) 425--461. 

\bibitem{BaiEas-paraconf} 
T.~N.~Bailey, M.~G.~Eastwood, Complex paraconformal manifolds -- their differential geometry and twistor theory, 
\textit{Forum Math.}, {\bf 3} (1991) 61--103.

\bibitem{Bon}
E.~Bonan, Sur les $G$-structures de type quaternionien, \textit{Cahiers Topologie G\'eom. Diff\'erentielle},
{\bf 9} (1967) 389--461. 

\bibitem{Bry-holo_twist}
R.~L.~Bryant, Two exotic holonomies in dimension four, path geometries, and twistor theory, 
\textit{Complex geometry and Lie theory (Sundance, UT, 1989)}, Proc. Sympos. Pure Math., 53, Amer. Math. Soc., Providence, RI, 1991, 33--88. 

\bibitem{Car-96} 
J.~B.~Carrell, A remark on the Grothendieck residue map, 
\textit{Proc. Amer. Math. Soc.}, {\bf 70} (1978) 43--48. 

\bibitem{ChiSch-holo_twist} 
Q.-S.~Chi, L.~J.~Schwachh\"ofer, Exotic holonomy on moduli spaces of rational curves, 
\textit{Differential Geom. Appl.} {\bf 8} (1998) 105--134.

\bibitem{FulHar} 
W.~Fulton, J.~Harris, \textit{Representation theory. A first course}, Graduate Texts in Mathematics, 129. 
Readings in Mathematics. Springer-Verlag, New York, 1991. 

\bibitem{GraRem-cas} 
H.~Grauert, R.~Remmert, \textit{Coherent analytic sheaves}, 
Grundlehren der Mathematischen Wissenschaften [Fundamental Principles of Mathematical Sciences], 265, 
Springer-Verlag, Berlin, 1984. 

\bibitem{Gua-AnnMath} 
M.~Gualtieri, Generalized complex geometry, \textit{Ann. of Math. (2)}, {\bf 174} (2011) 75--123. 

\bibitem{Hit-complexmfds}
N.~J.~Hitchin, Complex manifolds and
Einstein's equations, \textit{Twistor geometry and nonlinear systems
(Primorsko, 1980)}, 73--99, Lecture Notes in Math., 970, Springer, Berlin, 1982. 

\bibitem{Paltin-2005} 
P.~Ionescu, Birational geometry of rationally connected manifolds via quasi-lines, 
\textit{Projective varieties with unexpected properties}, 317--335, Walter de Gruyter GmbH \& Co. KG, Berlin, 2005. 

\bibitem{Joyce-quatern_consrt} 
D.~Joyce, Compact hypercomplex and quaternionic manifolds, \textit{J. Differential Geom.} {\bf 35} (1992) 743--761.

\bibitem{Kod}
K.~Kodaira, A theorem of completeness of characteristic systems for analytic families of
compact submanifolds of complex manifolds,
\textit{Ann. of Math. (2)}, {\bf 75} (1962) 146--162. 

\bibitem{KodSpe-I_II} 
K.~Kodaira, D.~C.~Spencer, On deformations of complex analytic structures. I, II,  
\textit{Ann. of Math. (2)}, {\bf 67} (1958) 328--466. 

\bibitem{KodSpe-III} 
K.~Kodaira, D.~C.~Spencer, On deformations of complex analytic structures. III. Stability theorems for complex structures, 
\textit{Ann. of Math. (2)}, {\bf 71} (1960) 43--76. 

\bibitem{LeBSal-rigid_qK} 
C.~R.~LeBrun, S.~Salamon, Strong rigidity of positive quaternion-K\"ahler manifolds, 
\textit{Invent. Math.} {\bf 118} (1994) 109--132.

\bibitem{KMa-BLMS95} 
K.~C.~H.~Mackenzie, Lie algebroids and Lie pseudoalgebras, \textit{Bull. London Math. Soc.}, {\bf 27} (1995) 97--147. 

\bibitem{fq}
S.~Marchiafava, L.~Ornea, R.~Pantilie, Twistor Theory for CR quaternionic manifolds and related structures,
\textit{Monatsh. Math.}, {\bf 167} (2012) 531--545.  

\bibitem{fq_2}
S.~Marchiafava, R.~Pantilie, Twistor Theory for co-CR quaternionic manifolds and related structures,
\textit{Israel J. Math.}, {\bf 195} (2013) 347--371.  

\bibitem{MerSch} 
S.~Merkulov, L.~Schwachh\"ofer, Classification of irreducible holonomies of torsion-free affine connections, 
\textit{Ann. of Math. (2)}, {\bf 150} (1999) 77--149. 

\bibitem{MorrRos-MathAnn78} 
J.~Morrow, H.~Rossi, Submanifolds of $\C\!P^n$ with splitting normal bundle sequence are linear, 
\textit{Math. Ann.}, {\bf 234} (1978) 253--261. 

\bibitem{Nar-deformations} 
M.~S.~Narasimhan, Deformations of complex structures and holomorphic vector bundles, 
in \textit{Complex analysis, Proc. Summer School, (Trieste, 1980)}, J.~Eells (editor), Lecture Notes in Math., Springer, 1982, 196--209.  

\bibitem{Pan-vq} 
R.~Pantilie, On the classification of the real vector subspaces of a quaternionic vector space, 
\textit{Proc. Edinb. Math. Soc. (2)}, {\bf 56} (2013) 615--622.

\bibitem{Pan-twistor_(co-)cr_q} 
R.~Pantilie, On the twistor space of a (co-)CR quaternionic manifold, 
\textit{New York J. Math.}, {\bf 20} (2014) 959--971.

\bibitem{Pan-integrab_co-cr_q} 
R.~Pantilie, On the integrability of co-CR quaternionic structures, 
\textit{New York J. Math.}, {\bf 22} (2016) 1--20.

\bibitem{Qui-QJM98}
D.~Quillen, Quaternionic algebra and sheaves on the Riemann sphere,
\textit{Q. J. Math.}, {\bf 49} (1998) 163--198. 

\bibitem{Sal-dg_qm}
S.~Salamon, Differential geometry of quaternionic manifolds, \textit{Ann. Sci. \'Ecole Norm. Sup. (4)},
{\bf 19} (1986) 31--55. 


\end{thebibliography}
\end{document}